\DeclareMathOperator\E{\mathbb{E}}
\DeclareMathOperator\Prob{\mathbb{P}}
\newtheorem{theorem}{Theorem}[section]
\newtheorem{lemma}[theorem]{Lemma}
\newtheorem{proposition}[theorem]{Proposition}
\newtheorem{corollary}[theorem]{Corollary}
\newtheorem{definition}[theorem]{Definition}
\newtheorem{remark}[theorem]{Remark}
\numberwithin{equation}{section}
\title{A sharp leading order asymptotic of the diameter of a long range percolation graph }
\author[Tianqi Wu]{{Tianqi} Wu}
\address{Department of Mathematics, UCLA}
\email{timwu@math.ucla.edu}
\date{\today} 
\begin{document}
	
	\maketitle
	
	\begin{abstract}
	Many real-world networks exhibit the so-called \emph{small-world phenomenon}: their typical distances are much smaller than their sizes. One mathematical model for this phenomenon is a \emph{long-range percolation} graph on a $d$-dimensional box $\{0, 1, \cdots, N\}^d$, in which edges are independently added between far-away sites with probability falling off as a power of the Euclidean distance. A natural question is how the resulting diameter of the box of size $N$, measured in \emph{graph-theoretical distance}, scales with $N$. This question has been intensely studied in the past and the answer depends on the exponent $s$ in the connection probabilities. In this work we focus on the critical regime $s = d$ studied earlier in a work by Coppersmith, Gamarnik, and Sviridenko and improve the bounds obtained there to a sharp leading-order asymptotic, by exploiting the high degree of concentration due to the large amount of independence in the model. 
	\end{abstract} 

\section{Background and Main Result}

Many real-world networks exhibit the \emph{small-world} phenomenon: their typical distances are much smaller than their sizes. The term originates from the old paper \cite{Milgram67} which suggested that two average Americans are just six acquaintances away from each other (the so-called ``six degrees of separation''), and the phenomenon has received renewed interest since advances in communication and transportation technologies, in particular the Internet, have tremendously increased global connectivity in recent decades. 

A natural way to model this phenomenon is a \emph{long-range percolation graph} on the hypercubic lattice $\mathbb{Z}^d$, in which nearest neighbors are connected \emph{a priori} and, independently at random, a new edge is added between each pair of non-adjacent sites $x, y$ with probability $p_{x, y} := 1 - \exp{\left(-\beta |x-y|^{-s}\right)} \approx \beta |x-y|^{-s}$, for some parameter $\beta > 0, s > 0$. The objects of interest are how the \emph{typical graph-theoretical distance} $D(x, y)$ scales with the Euclidean distance $|x-y|$ and, when restricted to a finite box $[N]^d := \{0, 1, \cdots, N\}^d$, how the \emph{diameter} $D_N$ in the graph-theoretical distance scales with the box size $N$. The motivation is that adding long edges, even quite sparsely, could substantially shorten the typical distance of the network (see e.g.~the short article \cite{WS98}).  

This type of models was introduced and studied by \cite{BB01} for a one-dimensional case and extended to a multi-dimensional version by \cite{CGS02}. Five regimes of behavior have been identified based on the exponent $s$: $s < d$, $s = d$, $d < s < 2d$, $s = 2d$, and $s > 2d$. 
\begin{itemize}
	\item In the regimes $s < d$, \cite{BKPS01} showed the graph diameter $D_N$ approaches the (deterministic) number $\lceil d/(d-s) \rceil$ as $N\rightarrow \infty$ using the \emph{stochastic dimension} method. 
	\item In the regime $d < s < 2d$, Biskup (\cite{Biskup04}, \cite{Biskup11}) showed a poly-logarithmic scaling for both the graph distance and the graph diameter: $D(x, y) = (\log |x-y|)^{\Delta + o(1)}$ as $|x-y| \rightarrow \infty$ and $D_N = (\log N)^{\Delta + o(1)}$ as $N\rightarrow \infty$, where $\Delta^{-1}:= \log_2 (2d/s)$. For the graph distance, this result was later improved to $D(x, y) = \Theta(1) (\log |x-y|)^{\Delta}$ for a continuum model in \cite{BL19} and, more recently, for the model on $\mathbb{Z}^d$ in \cite{BK22}. The main idea is identifying a ``binary hierarchy'' of edges forming the path between two given sites. 
	\item In the regimes $s > 2d$, \cite{Berger04} showed the graph distance resumes linear scaling with the Euclidean distance. 
	\item The critical regime $s = 2d$, where the model is scale-invariant, is still largely open. In dimension $d = 1$, \cite{DS13} showed that there exists a \emph{distance exponent} $\theta(\beta) \in (0, 1)$ such that the graph distance $D(0, N) = \Theta(1) N^{\theta(\beta)}$. Recently, analogous results were obtained and extended for both the graph distance and the graph diameter \emph{in all dimensions} $d$ by B{\"a}umler (\cite{Baumler22a}, \cite{Baumler22b}).
\end{itemize}

Our work focuses on the critical regime $s = d$. Precisely, let $$[N]^d \equiv \{0, 1, \cdots, N\}^d$$ denote a hypercubic lattice of length $N$, and let $\|x\|$ denote the $\ell^1$-norm of $x \in \mathbb{Z}^d$. We consider a random graph $G(N)$ on the hypercube $[N]^d$ in which every pair of sites $x, y \in [N]^d$ is independently connected with probability 
\begin{equation}\label{eq:connection_probabilities}
\begin{cases}
	1 & \text{if} \ \|x-y\| = 1, \\
	1 - \exp{\left(-\frac{\beta}{\|x-y\|^{d}}\right)} & \text{otherwise},
\end{cases}
\end{equation}
where $\beta > 0$ is some fixed parameter. Let $D(x, y)$ denote the graph-theoretical distance between $x, y$ in $G(N)$ and let $D_N$ denote the associated diameter of $G(N)$. The authors of \cite{CGS02} proved the following result:

\begin{proposition}\label{prop_1} There exist constants $C_1, C_2$ (which may depend on $\beta, d$) such that
	\[\lim_{N\rightarrow \infty} \Prob \left(\frac{C_1 \log N}{\log \log N} \leq D_N \leq \frac{C_2 \log N}{\log \log N} \right) = 1. \]
\end{proposition}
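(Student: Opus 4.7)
The bound splits into lower and upper halves; the former follows from a concentration estimate on the maximum degree, while the latter is the technical heart of the proposition and I would handle it via a multi-scale renormalization.

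\textbf{Lower bound.} For any vertex $x \in [N]^d$, the expected degree is
\[
\E[\deg(x)] \leq 2d + \beta \sum_{y \in [N]^d \setminus \{x\}} \|y - x\|^{-d} = O(\log N),
\]
using the standard dyadic-annulus estimate $\sum_{y \in \mathbb{Z}^d,\, 1 \leq \|y\| \leq N} \|y\|^{-d} \asymp \sum_{r=1}^{N} r^{d-1} \cdot r^{-d} = \Theta(\log N)$. Since $\deg(x)$ is a sum of independent Bernoullis, a Chernoff bound yields $\Prob(\deg(x) \geq A \log N) \leq N^{-2d}$ for $A = A(\beta, d)$ sufficiently large; a union bound over the $(N+1)^d$ vertices ensures $\max_x \deg(x) \leq A \log N$ with probability $1 - o(1)$. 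On this event, $|B(x, k)| \leq (2 A \log N)^k$ by the usual recursive count, so the diameter condition $|B(x, D_N)| \geq (N+1)^d$ forces $D_N \geq (d - o(1)) \log N / \log \log N$, which suffices for any $C_1 < d$.

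\textbf{Upper bound: multi-scale setup.} Fix a growth factor $\alpha := \lceil (\log N)^{1/d} \rceil$ and scales $L_k := \alpha^k$ for $k = 0, 1, \ldots, K$, with $K$ minimal so that $L_K \geq N$; this gives $K = O(\log N / \log \log N)$. At each scale $k$ partition $[N]^d$ into nested sub-boxes of side $L_k$. The central probabilistic claim is a \emph{refined connectivity estimate}: with probability $1 - o(1)$, for every $k \geq 2$, every pair of adjacent scale-$k$ sub-boxes $B, B'$, and every scale-$(k-1)$ sub-sub-box $B_0 \subseteq B$, there is at least one edge of $G(N)$ from $B_0$ to $B'$. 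The expected number of such edges is $\Theta(L_{k-1}^d) \geq \Theta(\log N)$ by our choice of $\alpha$, so Chernoff with a polynomial union bound over the $O(K \cdot (N/L_k)^d \cdot \alpha^d)$ triples $(B, B', B_0)$ establishes the claim once the Chernoff constant is fixed large enough (if necessary by enlarging $\alpha$ by a constant factor).

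\textbf{Path construction and main obstacle.} Given $x, y$, let $k^*$ be the smallest $k$ such that $x$ and $y$ lie in the same or adjacent scale-$k$ sub-boxes. Descending the scale hierarchy, at scale $k^*$ select an inter-sub-box edge whose endpoints lie in the scale-$(k^*-1)$ sub-sub-boxes of $x$ and $y$ respectively (available via the refined estimate), then recurse into each. Letting $T(k)$ denote the number of edges needed to cross a scale-$k$ sub-box, the recursion $T(k) \leq T(k-1) + O(1)$ for $k \geq 2$ with base case $T(1) = O((\log N)^{1/d})$ via the deterministic nearest-neighbor lattice edges (or, in dimension $d = 1$, via one recursive application of the proposition to a $\log N$-sized sub-box, giving $T(1) = O(\log \log N / \log \log \log N)$) unfolds to $T(K) = O(\log N / \log \log N)$, completing the upper bound. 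I expect the main obstacle to be the refined connectivity estimate near $k = 2$, where $L_{k-1}^d$ only barely exceeds $\log N$ and one must tune constants in the Chernoff and union bound carefully; a secondary obstacle is the base case in low dimensions, which is trivial for $d \geq 2$ but requires the self-referential recursion above when $d = 1$.
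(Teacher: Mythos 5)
First, a point of context: the paper does not prove Proposition \ref{prop_1} at all --- it is quoted from \cite{CGS02} --- so there is no ``paper proof'' to match; the paper's own argument (for the sharper Theorem \ref{main_theorem}) is a breadth-first ball-growth scheme. Your lower bound is correct and standard: the maximum degree is $O(\log N)$ with high probability, so $|B_m(x)| \leq (C\log N)^m$ and $D_N \geq (d-o(1))\log N/\log\log N$.

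The upper bound, however, has a genuine gap, and it sits exactly where you put the weight of the argument. There are two problems. Minor one: the connectivity estimate you prove pins only one endpoint of the edge (from a scale-$(k-1)$ sub-box $B_0$ to the whole adjacent scale-$k$ box $B'$), but the path construction uses an edge with \emph{both} endpoints pinned, one in the sub-box of $x$ and one in the sub-box of $y$. The doubly-pinned version has expected edge count $L_{k-1}^{2d}/\Theta(L_k^d)=\Theta(\alpha^{(k-2)d})$, which is $\Theta(1)$ at $k=2$, so it does not hold with high probability at the bottom of the hierarchy. Major one: ``recurse into each'' spawns \emph{two} sub-problems at scale $k-1$ (connect $x$ to $u$, and connect $v$ to $y$), so the recursion you actually get is $T(k)\leq 2T(k-1)+1$, not $T(k)\leq T(k-1)+O(1)$. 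Unfolded over $K=\Theta(\log N/\log\log N)$ scales this gives $T(K)=\Theta(2^K)=N^{\Theta(1/\log\log N)}$, which grows faster than any power of $\log N$; it yields only $D_N\leq N^{o(1)}$, not the claimed bound. (If instead you only recurse on one side, the other endpoint $v$ is merely known to lie somewhere in the scale-$k$ box of $y$, so that sub-problem has not decreased in scale.) The binary-hierarchy recursion is the right tool in the regime $d<s<2d$, where scales grow doubly exponentially and there are only $O(\log\log N)$ levels, so the factor $2^{\#\text{levels}}$ is polylogarithmic; at $s=d$ with geometrically growing scales the doubling is ruinous.

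The standard way around this --- used in \cite{CGS02} and, in sharpened form, in this paper --- is to grow graph-distance balls breadth-first rather than recursively: one shows that $|\partial B_{m+1}(x)|\geq c\log N\cdot|\partial B_m(x)|$ with high probability (a Chernoff bound for the sum $\sum_{y}\mathbbm{1}_{\{y\leftrightarrow\partial B_m\}}$ of independent indicators), iterates until $|B_m(x)|$ and $|B_m(y)|$ both exceed $N^{d/2+\varepsilon}$, and then connects the two balls by a single additional edge. Because all $(\log N)^m$ frontier sites are grown in parallel, the cost is additive in $m$ rather than multiplicative across a binary tree, which is what produces $D_N=O(\log N/\log\log N)$. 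I would recommend replacing your multi-scale step with this ball-growth argument (your refined connectivity estimate is not wasted: its singly-pinned form is essentially the statement that each frontier site has $\Theta(\log N)$ fresh neighbors).
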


Let us denote by $B_m(x)$ the \emph{ball} of radius $m$ centered at $x$ in the \emph{graph distance}, i.e.
\[
B_m(x) := \{y\in [N]^d: D(x, y) \leq m\}.
\] 
Proposition \ref{prop_1} then may be formulated in the following equivalent way: 
\[
	\lim_{N\rightarrow \infty} \mathbb{P}(B_m(x) = [N]^d) =	\begin{cases}
 1 & m \geq \frac{C_2 \log N}{\log \log N} \\
 0 & m\leq \frac{C_1 \log N}{\log \log N}
	\end{cases}
	\]
for some constants $C_1, C_2$ (which may depend on $\beta, d$). 

Here is a heuristic explanation of this result. The critical exponent $s = d$ in the connection probabilities \eqref{eq:connection_probabilities} means that a typical site $x$ in $G(N)$ has $\Theta(\log N)$ neighbors with high probability. This suggests that as $m$ increments, the ball $B_m(x)$ grows like a ``tree'' with branching degree $\Theta(\log N)$, if we ignore for the moment there may be multiple connections to the same site. Therefore, starting from $x$, in $m$ steps we should be able to reach $\left(\Theta(\log N)\right)^m$ sites. At $m = d \log N/\log \log N$, this amounts to $\Theta(N^d)$ sites. This suggests the constants $C_1, C_2$ can be brought arbitrarily close to $d$. The arguments in \cite{CGS02} already show that $C_1 = d - \varepsilon$ for any $\varepsilon > 0$, but the value of $C_2$ is much bigger than $d$. In this work we show that we can also take $C_2 = d + \varepsilon$ for any $\varepsilon > 0$.

\begin{theorem}\label{main_theorem} For any $\varepsilon > 0$,
	\[\lim_{N\rightarrow \infty} \Prob \left(D_N \leq  \frac{(1+\varepsilon) d \log N}{\log \log N}\right) = 1. \]
	In particular, $D_N \frac{\log \log N}{\log N} \rightarrow d$ as $N\rightarrow \infty$ in probability. 
\end{theorem}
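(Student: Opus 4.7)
The plan is to show, for any fixed pair $x, y \in [N]^d$, that $\Prob(D(x, y) > m) = o(N^{-2d})$ where $m = \lfloor(1+\varepsilon)d\log N/\log\log N\rfloor$, and then union bound over the $O(N^{2d})$ pairs. To decouple the ``growth'' and ``connection'' parts of the argument, I first sprinkle $G$ into two independent copies $G_1, G_2$ of the same model with halved intensity $\beta/2$ such that $G = G_1 \cup G_2$; this is possible because $1 - e^{-\beta/\|u-v\|^d} = 1 - e^{-\beta_1/\|u-v\|^d}e^{-\beta_2/\|u-v\|^d}$ for $\beta_1 + \beta_2 = \beta$. The resulting halving of $\beta$ only affects sub-leading terms, since $\log(\beta \log N / 2) = \log\log N + O(1)$.

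The main step (ball growth in $G_1$) is to run a BFS from each of $x, y$ for $m_0 = \lceil(1/2 + \varepsilon/3)d\log N/\log\log N\rceil$ generations, revealing a fresh layer of $G_1$-edges at each step so that successive generations use disjoint (and hence conditionally independent) edge information. The goal is to show $|B^{G_1}_{m_0}(x)|, |B^{G_1}_{m_0}(y)| \geq N^{d/2 + \eta}$ for some fixed $\eta = \eta(\varepsilon, d) \in (0, d/4)$. The key quantitative input is the following robustness estimate: for any $u \in [N]^d$ and any $S \subseteq [N]^d$ with $u \notin S$ and $|S| \leq N^{3d/4}$, the expected number of $G_1$-neighbors of $u$ in $[N]^d \setminus S$ is at least $c(\beta, d)\log N$, because even if $S$ is adversarially chosen as the $\ell^1$-ball around $u$ of radius $R \sim N^{3/4}$, the residual sum $\sum_{\|u-v\|\geq R}(\beta/2)/\|u-v\|^d$ is still of order $\log(N/R) = (\log N)/4$. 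A Chernoff/Bernstein bound applied to the sum over the current frontier $F_{k-1} = B^{G_1}_{k-1}\setminus B^{G_1}_{k-2}$ then gives that the number of fresh $G_1$-edges from $F_{k-1}$ to $[N]^d \setminus B^{G_1}_{k-1}$ is at least $c|F_{k-1}|\log N$ with failure probability $\exp(-\Omega(|F_{k-1}|\log N))$; a first-moment bound on double-counts converts the edge count into a distinct-vertex count $|F_k|$. Iterating for $m_0$ generations yields $|F_{m_0}| \geq (c\log N)^{m_0(1-o(1))} \geq N^{d/2+\eta}$, and throughout the iteration $|B^{G_1}_k| \leq N^{d/2+\eta+o(1)} \leq N^{3d/4}$ so the robustness estimate always applies.

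For the connection step, I condition on $G_1$ and use the independence of $G_2$: on the event $|B^{G_1}_{m_0}(x)|, |B^{G_1}_{m_0}(y)| \geq N^{d/2+\eta}$, for any two such deterministic sets $A, B$ the expected number of $G_2$-edges between them exceeds $(\beta/2)|A||B|/(dN)^d \geq c N^{2\eta}$, and since the edges of $G_2$ are independent one has $\Prob(\text{no }G_2\text{-edge between }A, B \mid G_1) \leq \exp(-cN^{2\eta})$, which is $o(N^{-2d})$. Existence of any such edge yields $D(x, y) \leq 2m_0 + 1 \leq m$, completing the per-pair bound; the union bound over pairs then gives $\Prob(D_N > m) = o(1)$.

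The hardest part is achieving $o(N^{-2d})$ per-pair failure in the growth step, which requires super-polynomial concentration at \emph{every} one of the $m_0$ BFS generations. For generations with $|F_{k-1}| \gtrsim (\log N)^2$ the Chernoff failure $\exp(-\Omega(|F_{k-1}|\log N))$ is already super-polynomial, so the bottleneck is the first few generations, in particular $|F_1| = \deg_{G_1}(x)$, where Chernoff gives only $\Prob(|F_1| \geq c\beta\log N/2) \geq 1 - N^{-c'\beta}$ and this is not $o(N^{-2d})$ for small $\beta$. To overcome this, I plan to coarse-grain $[N]^d$ into blocks of a large fixed side length $L = L(\beta, \varepsilon, d)$: the block-level model has effective intensity $\Theta(L^d\beta)$ which can be made arbitrarily large by choosing $L$, and the BFS is run at the block level at the cost of only an $O(1)$ additive term in $m_0$, absorbed into the $(1+\varepsilon)$ slack of the statement. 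Once past the first few generations, concentration is super-polynomial and the remaining pieces fit together with room to spare.
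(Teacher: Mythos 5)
Your overall strategy is essentially the paper's: grow balls around $x$ and $y$ for roughly $(1/2+\varepsilon')d\log N/\log\log N$ generations using the lower bound $\sum_{v\notin S}\beta\|u-v\|^{-d}\gtrsim \log(N^d/|S|)$ together with a Chernoff bound at each generation, connect the two resulting sets of size $N^{d/2+\eta}$ by a single edge with failure probability $\exp(-cN^{2\eta})$, and union bound over pairs. You also correctly identify the real bottleneck (the polynomial-only concentration of the first generation) and the right family of fixes (fatten the seed to a large constant-size region); the paper's version is to start the exploration from the $\ell^1$-ball $U_x$ of constant radius $R$, which makes the per-pair failure probability $N^{-c_2R^d}=o(N^{-2d-1})$ at the cost of an additive $2R$ in the distance.

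Two steps as written would not go through. First, the coarse-graining: if the BFS is genuinely ``run at the block level'' for all $m_0$ generations, a block-level path of length $k$ only yields a graph path of length $\Theta(Lk)$, because consecutive long edges generally enter and leave a block at different sites and one must pay $O(L)$ nearest-neighbor steps inside each block; this is a multiplicative factor $L$, not an ``$O(1)$ additive term in $m_0$'', and it destroys the constant $d$ in the leading order. The repair is what your own diagnosis already points to: use the block (equivalently, a constant-radius ball) only as the generation-zero seed, and run every subsequent generation at site level. Second, the ``first-moment bound on double-counts'' is not automatic: the natural bound on collisions is of order $\sum_y\rho(y,F_{k-1})^2$ with $\rho(y,S)=\sum_{x\in S}\beta\|x-y\|^{-d}$, and this need not be $o\bigl(\sum_y\rho(y,F_{k-1})\bigr)$, since a site $y$ adjacent to a spatially dense frontier can have $\rho(y,F_{k-1})$ as large as $\Theta(\log N)$. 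The paper's fix is to truncate: count only sites with $\rho(y,\partial B_m)<1/\log N$, and show via Markov's inequality that the excluded set has size $O((\log N)^2|\partial B_m|)$, too small to affect the $\log(N^d/|V^c|)$ lower bound. With these two repairs your argument coincides with the paper's proof.
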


The next step is to go beyond the leading order, which requires a deeper look at the spatial distribution of the ``tree'' and see how ``homogeneously'' the ``tree'' is distributed in the box. This is a question currently under investigation.

\section{Proof of Theorem \ref{main_theorem}}

The main idea of the proof is quite simple: we find some natural numbers $m_1, m_2$ such that for any two sites $x, y$, the ball $B_{m_1}(x)$ is connected to the ball $B_{m_2}(y)$ by an edge with high probability. The sum $m_1 + m_2$ then provides an upper bound on the graph distance between $x$ and $y$ in $G(N)$ and we show it can be chosen to be at most $(d+\varepsilon) \log N / \log \log N$. 

To do this, we will use a counting argument for the sizes of the sets $B_{m_1}(x)$, $B_{m_2}(y)$. The basic principle is that for any two disjoint sets $A_1, A_2 \subseteq [N]^d$, there are $|A_1||A_2|$ many ways for them to be connected by an edge, in correspondence to the pairs $(x_1, x_2) \in A_1 \times A_2$. If the sites in $A_1$ are separated by distance $R$ from the sites in $A_2$, then the connection probability for each pair $(x_1, x_2) \in A_1 \times A_2$ is about $1-\exp\left(-\Theta(R^{-d})\right)$. Under the assumption of independence, the total connection probability between $A_1$ and $A_2$ is then about $1 - \exp\left(-\Theta\left(\frac{|A_1||A_2|}{R^d}\right)\right)$. Therefore, the two sets will almost certainly be connected if $|A_1||A_2| \gg R^d$ and will almost certainly fail to be connected if $|A_1||A_2| \ll R^d$. Since the distance between any two sites in $[N]^d$ is at most $\Theta(N)$, this leads us to show that
\[
|B_{m_1}(x)| \cdot |B_{m_2}(y)| \geq \Theta(N^{d+\varepsilon}).
\]
Recall our ``tree'' heuristic from the previous section that says $|B_{m}(x)| = (\Theta(\log N))^m$. Taking logarithms on both sides of the inequality above, this suggests that we take $m_1 = m_2 = \alpha d \log N / \log \log N$, for some $\alpha \in (1/2, 1)$, and show our heuristic lower bound
\[|B_m(x)| \geq [\Theta(\log N)]^m
\]
holds with high probability. Moreover, our ``tree'' heuristic also suggests that we establish this by showing that the \emph{iterative} bound
\[
|B_{m+1} (x)| \geq \Theta(\log N)\cdot |B_m(x)|
\]
holds with high probability. 

Let us now proceed with the rigorous proof. In the following, we write
\begin{itemize}
	\item $x \leftrightarrow y$ to mean site $x$ is connected to site $y$ by an edge;
	\item $x \leftrightarrow U$ to mean site $x$ is connected to \emph{some} site belonging to the set $U$ by an edge.
	\item $U \leftrightarrow V$ to mean \emph{some} site belonging to the set $U$ is connected to \emph{some} site belonging to the set $V$ by an edge.
\end{itemize}
Recall that we defined $B_m(x)$ to be the ball of radius $m$ centered at $x$ in the \emph{graph distance} $D$. We shall need a generalization of this notion. 

\begin{definition} Given a set $U \subseteq [N]^d$, define the \emph{$m$-step expansion} of $U$ to be 
	\[
	B_m(U) := \{y\in [N]^d: D(x, y)\leq m \mbox{ for some } x\in U\}
	\]
and its \emph{boundary} to be the set
\[
\partial B_m(U) := B_m(U) \setminus B_{m-1}(U).
\]
with the convention that $B_{-1}(U) = \emptyset$. The pair 
\[
\mathbf{B}_m(U) := (B_{m-1}(U), \partial B_m(U))
\]
defines a stochastic process indexed by $m = 0, 1, 2, \cdots $ on the state space
\[
\mathbf{S}:= \{(V_1, V_2): V_1, V_2 \text{ disjoint subsets of } [N]^d\}.
\]
It is easy to check that the process $\mathbf{B}_m(U)$ is a \emph{Markov chain} on $\mathbf{S}$ started at $\mathbf{B}_0(U) = (\emptyset, U)$. Indeed, let $G_1, G_2, \cdots$ be i.~i.~d.~realizations of the random graph $G(N)$, then the process $\mathbf{B}_m(U)$ may be iteratively constructed as
\begin{equation}\label{iterative_construction}
	\left\{
	\begin{aligned}
		B_{m}(U) &= B_{m-1}(U) \cup \partial B_m(U), \\
		\partial B_{m+1}(U) &= \{y\in B_m(U)^c: y \leftrightarrow \partial B_m(U) \text{ in } G_{m+1}\}.
	\end{aligned}
	\right.
\end{equation}
By abuse of notation, we shall simply write $\mathbf{B}_m = ( B_{m-1}, \partial B_m)$ for the chain $\mathbf{B}_m(U)$ with $U = B_0$.
\end{definition}

The starting point of our proof of Theorem \ref{main_theorem} is the following concentration inequality for the sizes of the successive boundaries $\partial B_m$.

\begin{lemma}[Concentration of $|\partial B_m|$]\label{chernoff} For $0 < \delta < 1$, $m\geq 0$,
	\[
	\Prob{\Bigl( 1-\delta \leq  \frac{|\partial B_{m+1}|}{\E \bigl[|\partial B_{m+1}(U)| \bigm| \mathbf{B}_m\bigr]} \leq 1+\delta \Bigm\vert \mathbf{B}_m \Bigr)} \geq 1 - 2 \exp{\Bigl(-\frac{\delta^2}{3} \mathbb{E} \bigl[ |\partial B_{m+1}| \bigm| \mathbf{B}_m\bigr]\Bigr)}.
	\]
\end{lemma}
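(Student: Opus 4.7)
The plan is to reduce the statement to the standard multiplicative Chernoff bound for a sum of independent Bernoulli random variables. The key observation is that, conditional on $\mathbf{B}_m = (B_{m-1}, \partial B_m)$, the iterative construction \eqref{iterative_construction} gives
\[
|\partial B_{m+1}| = \sum_{y \in B_m^c} X_y, \qquad X_y := \mathbbm{1}\{y \leftrightarrow \partial B_m \text{ in } G_{m+1}\}.
\]
Since the edges of $G_{m+1}$ are independent, and the random variable $X_y$ depends only on the status of edges of $G_{m+1}$ with one endpoint equal to $y$ and the other in $\partial B_m$, the families of edges consulted by $X_y$ and $X_{y'}$ for distinct $y, y' \in B_m^c$ are disjoint. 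Therefore, conditional on $\mathbf{B}_m$, the collection $\{X_y\}_{y \in B_m^c}$ is a family of independent Bernoulli variables.

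Given this, the lemma follows from the standard multiplicative Chernoff inequality: if $X = \sum_i X_i$ is a sum of independent $\{0,1\}$-valued random variables with $\mu := \E X$, then for $0 < \delta < 1$,
\[
\Prob(|X - \mu| \geq \delta \mu) \leq 2 \exp\left(-\frac{\delta^2 \mu}{3}\right).
\]
Applying this to $X = |\partial B_{m+1}|$ conditional on $\mathbf{B}_m$, with $\mu = \E[|\partial B_{m+1}| \mid \mathbf{B}_m]$, yields exactly the stated inequality.

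There is no serious obstacle here; the only point requiring care is the independence claim for the indicators $X_y$, which relies on the fact that we have drawn a \emph{fresh} independent copy $G_{m+1}$ of the random graph at each step of \eqref{iterative_construction}, so that the edges tested at step $m+1$ are independent of the information encoded in $\mathbf{B}_m$. Once this is noted, everything reduces to a textbook Chernoff bound applied pointwise on the event $\{\mathbf{B}_m = (V_1, V_2)\}$ and then integrated.
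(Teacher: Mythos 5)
Your proposal is correct and matches the paper's proof essentially verbatim: both decompose $|\partial B_{m+1}|$ via \eqref{iterative_construction_2} into a sum of conditionally independent Bernoulli indicators (independence coming from the fresh copy $G_{m+1}$ and the disjointness of the edge sets each indicator examines) and then invoke the two-sided multiplicative Chernoff bound for Poisson trials. Your explicit justification of the independence of the indicators is a slightly fuller account of the step the paper leaves implicit, but the argument is the same.
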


\begin{proof} The iterative construction \eqref{iterative_construction} implies that conditioned on $\mathbf{B}_m$,
	\begin{equation}\label{iterative_construction_2}
	|\partial B_{m+1}| = \sum_{y\in B_m^c} \mathbbm{1}_{\left\{y \leftrightarrow \partial B_m \mbox{ in } G_m \right\}}
	\end{equation}
	is a sum of independent (but not identically distributed) Bernoulli random variables. The inequality then directly follows from a relative error version of the \emph{Chernoff bound for Poisson trials} (Corollary 4.6, \cite{MU05}). 
\end{proof}
\begin{remark} By the results below, the concentration in Lemma \ref{chernoff} gets \emph{exponentially better} with the increment of step number $m$, which means the main source of fluctuations is in the \emph{very first step}, $|\partial B_1|$.
\end{remark}

To use this \emph{iterative} concentration result, we need to estimate $\mathbb{E} \bigl[ |\partial B_{m+1}| \bigm| \mathbf{B}_m\bigr]$ in terms of $|\partial B_m|$. This is provided by the following \emph{iterative} bounds, which form the key ingredient of our proof of Theorem \ref{main_theorem}. 

\begin{lemma}\label{prob_iterate} Given $\alpha \in (0, 1)$, there exist positive, finite constants $c = c(\beta, d, \alpha), C = C(\beta, d), N_* = N_*(\beta, d)$ such that for $N\geq N_*$,
	\begin{equation}\label{eq:prob_iterate} 
		c \log N  \leq \frac{\mathbb{E} \bigl[ |\partial B_{m+1}| \bigm| \mathbf{B}_m\bigr]}{|\partial B_m|} \leq  C \log N  
	\end{equation}
	as long as 
	\begin{equation}\label{size_limit}
	|B_m| \leq N^{\alpha d}.
	\end{equation}
\end{lemma}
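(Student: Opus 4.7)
The plan is to compute $\mathbb{E}\bigl[|\partial B_{m+1}| \bigm| \mathbf{B}_m\bigr]$ explicitly from the iterative construction \eqref{iterative_construction} and then bound it using classical lattice-sum estimates. Since conditional on $\mathbf{B}_m$ the events $\{y \leftrightarrow \partial B_m \text{ in } G_{m+1}\}$ for distinct $y \in B_m^c$ are determined by disjoint collections of independent edges, one obtains
\[
\mathbb{E}\bigl[|\partial B_{m+1}| \bigm| \mathbf{B}_m\bigr] = \sum_{y \in B_m^c} \bigl(1 - e^{-f(y)}\bigr), \qquad f(y) := \beta \sum_{x \in \partial B_m} \|x - y\|^{-d}.
\]

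The upper bound follows immediately from $1 - e^{-t} \leq t$, a swap of summation, and the standard estimate $\sum_{y \in [N]^d \setminus \{x\}} \|x - y\|^{-d} \leq C_d \log N$ (since the $\ell^1$-sphere of radius $k$ contains $O(k^{d-1})$ lattice points, the sum is dominated by $\sum_{1 \leq k \leq dN} 1/k$).

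For the lower bound I would use $1 - e^{-t} \geq (1 - e^{-1})\min(t,1)$, which reduces the task to showing $\sum_{y \in B_m^c} \min(f(y), 1) \geq c(\alpha, \beta, d) |\partial B_m| \log N$. Setting $R := (2\beta |\partial B_m|)^{1/d}$ ensures that every $y$ with $\min_{x \in \partial B_m} \|y - x\| \geq R$ satisfies $f(y) \leq 1/2$, hence $\min(f(y),1) = f(y)$. The key per-source estimate is: for each $x_0 \in \partial B_m$,
\[
\sum_{\substack{y \in B_m^c \\ \min_{x \in \partial B_m} \|y - x\| \geq R}} \|x_0 - y\|^{-d} \,\geq\, \tilde c(x_0) (1 - \alpha) \log N - O(1),
\]
for some $\tilde c(x_0) > 0$ depending only on $x_0$'s position in $[N]^d$. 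One obtains this by starting from a matching lower bound $\sum_{y \neq x_0} \|x_0 - y\|^{-d} \geq \tilde c(x_0) \log N - O(1)$ on the full lattice sum and subtracting two corrections: the $B_m$-part, bounded by $\tilde c(x_0) \alpha \log N + O(1)$ using $|B_m| \leq N^{\alpha d}$ together with the fact that such sums are maximized when $B_m$ is an $\ell^1$-ball centered at $x_0$ (so the leading constants match and the difference scales like $(1-\alpha)$); and a correction for $y$ within $R$ of some other $x' \in \partial B_m$, controlled via an estimate on $|\bigcup_{x' \neq x_0} B_R(x')|$. Summing over $x_0$ yields the desired bound with $c(\alpha, \beta, d)$ carrying a factor of $(1 - \alpha)$.

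The main obstacle will be the overlap correction in the per-source estimate, since when $|\partial B_m|$ approaches $N^{\alpha d}$ the $R$-neighborhood of $\partial B_m$ can cover a sizable fraction of $[N]^d$ and the ``far'' sum alone degrades. In that regime one must supplement the far contribution with the count of \emph{saturated} $y$'s (those with $f(y) \geq 1$, each contributing $\min(f(y), 1) = 1$ to the sum); a careful volume bookkeeping, using that $|\{y : f(y) \geq 1\}|$ is controlled by $|\bigcup_{x \in \partial B_m} B_R(x)|$ together with the crude bound $\max_y f(y) \lesssim \beta \log |\partial B_m|$, ensures that the combined contribution still meets the required $\Omega(|\partial B_m| \log N)$ lower bound.
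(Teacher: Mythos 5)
Your upper bound is correct and is exactly the paper's argument (linearize $1-e^{-t}\leq t$, swap the order of summation, and use the standard bound $\sum_{y}\|x-y\|^{-d}\leq C_d\log N$). Your general strategy for the lower bound --- linearize on a ``good'' set where $f(y)$ is small, apply a rearrangement-type lower bound on the remaining lattice sum, and control the size of the discarded set --- is also the right skeleton and matches the paper's. The problem is in how you define the good set.

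The gap is in the geometric truncation. You discard every $y$ within $\ell^1$-distance $R=(2\beta|\partial B_m|)^{1/d}$ of some point of $\partial B_m$; the discarded region has volume up to $|\partial B_m|\cdot CR^d\asymp\beta|\partial B_m|^2$, which is \emph{quadratic} in $|\partial B_m|$. Once $|\partial B_m|\gtrsim N^{d/2}$ (i.e.\ $\alpha>1/2$) this exceeds $N^d$, the ``far'' set can be empty, and your per-source estimate yields nothing. This is not a removable corner case: the proof of Theorem \ref{main_theorem} applies the lemma with $\alpha=1/2+\varepsilon/2$ and balls of size $N^{(1/2+\varepsilon/4)d}>N^{d/2}$. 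Your proposed patch via saturated sites does not close the gap. Take $\partial B_m$ to be $N^{\alpha d}$ points spread uniformly with spacing $N^{1-\alpha}$, $\alpha>1/2$. Then every $y$ is within distance $\ll R$ of $\partial B_m$, so the far set is empty; yet a typical $y$ has $f(y)\asymp\beta N^{-(1-\alpha)d}\log N=o(1)$, so only $O(\beta|\partial B_m|)$ sites are saturated. Your lower bound (far contribution plus saturated count) is then $O(|\partial B_m|)$, while the truth --- carried entirely by \emph{near but unsaturated} sites, which your decomposition never credits --- is $\Theta(|\partial B_m|\log N)$. The fix is to define the good set by the weight itself rather than by distance: set $V_m:=\{y\in B_m^c: f(y)<\omega\}$ with, say, $\omega=1/\log N$. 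Markov's inequality applied to $\sum_{y}f(y)\leq C|\partial B_m|\log N$ gives $|V_m^c\setminus B_m|\leq C|\partial B_m|(\log N)^2$, which is \emph{linear} in $|\partial B_m|$, so $|V_m^c|\leq N^{\alpha d}\,\mathrm{polylog}(N)$ and the rearrangement bound $\sum_{y\in V_m}\|x_0-y\|^{-d}\geq c\log(N^d/|V_m^c|)\geq c(1-\alpha)d\log N-O(\log\log N)$ survives for every $\alpha<1$. This is precisely the route the paper takes.
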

\begin{remark} The lower bound in \eqref{eq:prob_iterate} deteriorates as $\alpha$ increases (cf. \eqref{bad_bound} below). To go beyond the leading order asymptotic, we need a better lower bound. 
\end{remark}

We defer the proof of Lemma \ref{prob_iterate} to Section \ref{proof_key_lemma}. Combining Lemma \ref{chernoff} and Lemma \ref{prob_iterate} immediately yields the following explicit iterative bound.
\begin{corollary}\label{prob_iterate_2} Given $\alpha \in (0, 1)$, there exist positive, finite constants $c_1 = c_1(\beta, d, \alpha), c_2 = c_2(\beta, d, \alpha), C = C(\beta, d), N_* = N_*(\beta, d)$ such that for $N\geq N_*$,
	\[
	\Prob{\Bigl( c_1 \log N \leq  \frac{|\partial B_{m+1}|}{|\partial B_{m}|} \leq C \log N \Bigm\vert \mathbf{B}_m \Bigr)} \geq 1 - 2 \exp{\left(-c_2 \log N \cdot |\partial B_{m}| \right)}.
	\]
as long as $|B_m| \leq N^{\alpha d}$.
\end{corollary}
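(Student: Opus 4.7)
The plan is to directly combine the conditional Chernoff bound from Lemma \ref{chernoff} with the two-sided bound on $\E[|\partial B_{m+1}| \mid \mathbf{B}_m]$ provided by Lemma \ref{prob_iterate}, using a fixed value of $\delta$; for concreteness I would take $\delta = 1/2$.

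In more detail, I would first condition on $\mathbf{B}_m$ and instantiate Lemma \ref{chernoff} with $\delta = 1/2$, which localizes $|\partial B_{m+1}|$ to within a constant multiplicative factor of its conditional mean, with failure probability at most $2\exp\bigl(-\tfrac{1}{12}\E[|\partial B_{m+1}|\mid\mathbf{B}_m]\bigr)$. Then, under the hypothesis $|B_m| \leq N^{\alpha d}$, I would apply the two-sided estimate
\[
c \log N \cdot |\partial B_m| \leq \E[|\partial B_{m+1}|\mid\mathbf{B}_m] \leq C \log N \cdot |\partial B_m|
\]
from Lemma \ref{prob_iterate} to translate the multiplicative envelope around the conditional mean into the absolute envelope $(c/2)\log N\cdot|\partial B_m| \leq |\partial B_{m+1}| \leq (3C/2)\log N \cdot |\partial B_m|$, and to replace the exponent in the tail bound by $-\tfrac{c}{12}\log N \cdot |\partial B_m|$. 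Setting $c_1 := c/2$, relabeling $3C/2$ as $C$, and $c_2 := c/12$ then yields the claimed inequality.

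There is essentially no obstacle here, as the argument is a mechanical combination of the two earlier results; the only subtlety worth tracking is the dependence of constants on $\alpha$. Since the lower constant $c$ in Lemma \ref{prob_iterate} depends on $\alpha$ and deteriorates as $\alpha \to 1$, both $c_1$ and $c_2$ inherit this $\alpha$-dependence, whereas the upper constant $C$ depends only on $\beta, d$, matching the statement of the corollary. The particular choice $\delta = 1/2$ is immaterial---any fixed $\delta \in (0,1)$ would give the same conclusion with different numerical values.
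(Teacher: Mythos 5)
Your proposal is correct and is exactly the argument the paper intends: the paper presents Corollary \ref{prob_iterate_2} as an immediate combination of Lemma \ref{chernoff} (with a fixed $\delta$) and the two-sided expectation bound of Lemma \ref{prob_iterate}, which is precisely what you carry out. Your bookkeeping of the constants, including the $\alpha$-dependence of $c_1, c_2$ versus the $\alpha$-independence of the upper constant, is also accurate.
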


From now on we will assume $B_0$ is of constant size. Iterating the bound in Corollary \ref{prob_iterate_2} yields the following concentration result.
\begin{corollary}\label{concentration} Assume $|B_0| \leq K$. Given $\alpha \in (0, 1)$, there exist positive, finite constants $c_1 = c_1(\beta, d, \alpha), c_2 = c_2(\beta, d, \alpha), C = C(\beta, d), N_* = N_*(\beta, d, \alpha, K)$ such that for $N\geq N_*$,
	\[
	\Prob{\Bigl( (c_1 \log N)^m \leq  \frac{|\partial B_{m}|}{|B_0|} \leq (C \log N)^m \Bigr)} \geq 1 - 2 \exp{\left(-c_2 \log N \cdot |B_0| \right)}.
	\]
as long as $m\leq \frac{\alpha d \log N}{\log \log N}$. 
\end{corollary}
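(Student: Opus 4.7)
The plan is to iterate Corollary \ref{prob_iterate_2} over $m$ steps and combine the one-step bounds via a union bound. Let $A_k$ denote the event
\[
\bigl\{c_1 \log N \leq |\partial B_{k+1}|/|\partial B_k| \leq C \log N \bigr\}
\]
and set $\mathcal{A}_m := \bigcap_{k=0}^{m-1} A_k$. On $\mathcal{A}_m$, telescoping the one-step inequalities yields $(c_1 \log N)^m \leq |\partial B_m|/|B_0| \leq (C \log N)^m$, which is exactly the desired estimate. So the task reduces to showing $\Prob(\mathcal{A}_m^c) \leq 2 \exp(-c_2 \log N \cdot |B_0|)$ for a suitable constant (possibly with $c_2$ slightly smaller than the one coming from Corollary \ref{prob_iterate_2}).

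A preliminary point is that Corollary \ref{prob_iterate_2} requires $|B_k| \leq N^{\alpha d}$ at each step, and this is borderline at the critical scale $m \sim \alpha d \log N / \log \log N$. I would handle this by fixing some $\alpha' \in (\alpha, 1)$ and invoking Corollary \ref{prob_iterate_2} with $\alpha'$ in place of $\alpha$; the resulting constants still depend only on $\beta, d, \alpha$. On $\mathcal{A}_k$ a geometric sum gives $|B_k| \leq 2 (C \log N)^k K$, and for $k \leq \alpha d \log N / \log \log N$ this is of order $N^{\alpha d + O(1/\log \log N)}$, hence strictly below $N^{\alpha' d}$ once $N$ is large enough in terms of $\alpha, \alpha', K$. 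This large enough threshold is absorbed into $N_*$, so the size constraint holds inductively along the good event.

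Next I would estimate $\Prob(\mathcal{A}_m^c) \leq \sum_{k=0}^{m-1} \Prob(\mathcal{A}_k \cap A_k^c)$ by conditioning on $\mathbf{B}_k$. Corollary \ref{prob_iterate_2} bounds the conditional failure probability by $2 \exp(-c_2 \log N \cdot |\partial B_k|)$, and on $\mathcal{A}_k$ the lower bound $|\partial B_k| \geq (c_1 \log N)^k |B_0|$ holds. Hence the $k$-th term is at most $2 \exp\bigl(-c_2 (\log N)(c_1 \log N)^k |B_0|\bigr)$, and the sum is dominated by the $k=0$ term $2 \exp(-c_2 \log N |B_0|)$; the remaining terms are smaller by super-polynomial factors in $N$ and, even summed over $m = O(\log N/\log \log N)$ indices, can be absorbed into the leading term by slightly shrinking $c_2$.

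The principal subtlety, as already flagged in the remark following Lemma \ref{chernoff}, is that essentially all the probabilistic fluctuation comes from the very first step: later steps concentrate \emph{exponentially better} because $|\partial B_k|$ has grown, so the union bound over $m$ iterations contributes negligibly, and the rate $\exp(-c_2 \log N |B_0|)$ propagates through the iteration unchanged (up to a constant). The secondary bookkeeping obstacle is verifying the size constraint $|B_k| \leq N^{\alpha d}$ at the critical scale, which I address by introducing the slack $\alpha' > \alpha$ as above.
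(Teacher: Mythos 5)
Your proposal is correct and follows essentially the same route as the paper: telescoping the one-step bounds of Corollary \ref{prob_iterate_2} on the intersection of the good events, invoking that corollary with a slightly enlarged exponent $\alpha' > \alpha$ (the paper's $\tilde\alpha = \alpha + \varepsilon$) to keep the size constraint $|B_k| \leq N^{\alpha' d}$ valid at the critical scale, and controlling the failure probability by a conditioned union bound whose sum is dominated by the $k=0$ term. No substantive differences from the paper's argument.
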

The details of the verification of Corollary \ref{concentration} are postponed to Section \ref{proof_corollaries}. The next proposition follows directly from the previous corollary and makes its implication more explicit.

\begin{corollary}\label{concentration_2} Assume $|B_0| \leq K$. Given $\alpha \in (0, 1)$, there exist positive, finite constants $c_1 = c_1(\beta, d, \alpha), c_2 = c_2(\beta, d, \alpha), C = C(\beta, d), N_* = N_*(\beta, d, \alpha, K)$ such that for $N\geq N_*$,
	\[
	\Prob{\Bigl( N^{\alpha d \, (1 - c_1/\log \log N)} \leq  \frac{|\partial B_{m}|}{|B_0|} \leq N^{\alpha d \, (1 + C /\log \log N)} \Bigr)} \geq 1 - 2 N^{-c_2 |B_0|}.
	\]
	for $m = \left\lfloor \frac{\alpha d \log N}{\log \log N} \right\rfloor$. 
\end{corollary}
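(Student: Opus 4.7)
The plan is a direct substitution of $m = \lfloor \alpha d \log N / \log\log N \rfloor$ into the bounds of Corollary \ref{concentration}, followed by a routine rewriting of both sides in the form $N^{\alpha d(1 \pm \text{const}/\log\log N)}$. No further probabilistic input is required, so the argument is purely algebraic.

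The probability bound is immediate from $\exp(-c_2 \log N \cdot |B_0|) = N^{-c_2 |B_0|}$. For the upper bound, using $m \leq \alpha d \log N/\log\log N$,
\[
\log\bigl[(C\log N)^m\bigr] = m\bigl(\log\log N + \log C\bigr) \leq \alpha d \log N + \frac{\alpha d \log C}{\log\log N}\log N,
\]
which gives $(C\log N)^m \leq N^{\alpha d(1 + \log C/\log\log N)}$; renaming the constant absorbs $\log C$ and yields the claim.

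For the lower bound, I first assume (without loss of generality, by shrinking $c_1$ if needed, which only strengthens the lower bound and hurts nothing else) that $c_1 < 1$, and restrict to $N$ large enough that $c_1 \log N > 1$, so that $\log(c_1 \log N) > 0$ and $(c_1\log N)^m$ is increasing in $m$. Using $m \geq \alpha d \log N/\log\log N - 1$, I compute
\[
\log\bigl[(c_1\log N)^m\bigr] \geq \alpha d \log N - \frac{\alpha d |\log c_1|}{\log\log N}\log N - \log\log N - \log c_1.
\]
The last two terms are of size $O(\log\log N)$, which is negligible compared to the main correction $\alpha d |\log c_1| \log N / \log\log N$. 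Consequently, after enlarging $c_1$ to (say) $|\log c_1| + 1$, the right-hand side is at least $\alpha d \log N \bigl(1 - c_1/\log\log N\bigr)$, which is the desired lower bound.

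I anticipate no substantive obstacle: the only care required is in choosing $N_*$ large enough to ensure both that $c_1 \log N > 1$ (so $(c_1\log N)^m$ is monotone in $m$) and that the $O(\log\log N)$ terms arising from the floor and from $\log c_1$ are dominated by the main correction term of order $\log N/\log\log N$. Both conditions hold for $N$ depending only on $\beta, d, \alpha, K$, consistent with the dependence declared in the statement.
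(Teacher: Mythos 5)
Your proposal is correct and matches the paper's (implicit) argument: the paper states that Corollary \ref{concentration_2} "follows directly" from Corollary \ref{concentration}, and your substitution of $m = \lfloor \alpha d \log N/\log\log N\rfloor$ with the routine logarithmic bookkeeping (including the care about $\log c_1 < 0$ and the $O(\log\log N)$ error from the floor) is exactly the intended calculation.
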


We are now ready to prove Theorem \ref{main_theorem}.

\begin{proof}[Proof of Theorem \ref{main_theorem}] Let $x, y$ be any two sites in $[N]^d$, and let $U_x, U_y$ be the ball of radius $R$ centered at $x, y$ in the \emph{ $\ell^1$-distance}, respectively, i.e
	\[
	U_x = \{z \in [N]^d: \|z-x\| \leq R\}, U_y = \{z \in [N]^d: \|z-y\| \leq R\}
	\]
for some positive constant $R$ to be specified later. Consider the process
\[
\mathbf{Z}_t := (\mathbf{B}_{\lfloor (t+1)/2\rfloor} (U_x), \mathbf{B}_{\lfloor t/2\rfloor} (U_y))
\]
indexed by $t = 0, 1, \cdots$. This corresponds to alternating between taking steps of the processes $\mathbf{B}_m(U_x)$ and $\mathbf{B}_m(U_y)$. Define the \emph{stopping time}
\[
\tau := \min(t\geq 0: B_{\lfloor (t+1)/2\rfloor}(U_x) \cap B_{\lfloor t/2\rfloor}(U_y) \neq \emptyset),
\]
then clearly $D(x, y) \leq 2R + \tau$, so for any $m\geq 0$,
\begin{equation}\label{stopping_time_relation}
	\mathbb{P}(D(x, y) \leq 2R + 2m + 1) \geq \mathbb{P}(\tau \leq 2m+1).
\end{equation} 
Moreover, on the event $\tau > 2m$, $B_m(U_x)$ and $B_m(U_y)$ are disjoint, so
\begin{equation}\label{two_balls_meet}
	\mathbb{P}(\tau = 2m+1|\mathbf{Z}_{2m}) = \mathbb{P}(\partial B_m(U_x) \leftrightarrow \partial B_{m}(U_y) \mbox{ in } G_{m+1}|\mathbf{Z}_{2m}) \ \mbox{ on } \{\tau > 2m\},
\end{equation}	
where $G_{m+1}$ is a realization of the random graph $G(N)$ independent from the process $\mathbf{Z}_t$ up to $t = 2m$ (see \eqref{iterative_construction}). It follows from the definition of the random graph $G(N)$ that
\begin{align}
	\mathbb{P}(\partial B_m(U_x) \leftrightarrow \partial B_{m}(U_y) \mbox{ in } G_{m+1}|\mathbf{Z}_{2m}) &= 1-\exp\left(-\sum_{z\in \partial B_m(U_x)} \sum_{w\in \partial B_{m}(U_y)} \frac{\beta}{\|z-w\|^d}\right) \\
	&\geq 1-\exp\left(-|\partial B_m(U_x)|\,|\partial B_{m}(U_y)| \, \frac{\beta}{(dN)^d}\right) \label{uniform_estimate}
\end{align}
where we used the uniform upper bound $\|z-w\|\leq dN$ for any $z, w\in [N]^d$. 

Now, suppose $0 < \varepsilon < 1/2$ is given. From now on, we fix
\[
m = \left\lfloor  \frac{(1/2+\varepsilon/2) d \log N}{\log \log N} \right\rfloor. 
\]
Let $E_x$ be the event that
\[
|\partial B_m(U_x)| \geq N^{(1/2+\varepsilon/4)d}
\]
and define $E_y$ analogously. Then the computations \eqref{two_balls_meet} and \eqref{uniform_estimate} from the preceding discussion yield
\begin{align}
\mathbb{P}(\tau = 2m+1|\mathbf{Z}_{2m})
	%\geq 1-\exp\left(-N^{(1+\varepsilon) d} \frac{\beta}{|dN|^d}\right)
	 \geq 1 - \exp\left(-\frac{\beta}{d^d}\cdot  N^{\varepsilon d/2}\right) \ \mbox{ on } E_x \cap E_y \cap \{\tau > 2m\}.
\end{align}
On the other hand, 
\[
\mathbb{P}(\tau \leq 2m|\mathbf{Z}_{2m}) = 1 \mbox{ on } E_x \cap E_y \cap \{\tau > 2m\}^c
\]
simply by definition, so altogether we have
\begin{equation}\label{eq:1st_error}
\mathbb{P}(\tau \leq 2m+1|E_x \cap E_y) \geq 1 - \exp\left(-\frac{\beta}{d^d}\cdot  N^{\varepsilon d/2}\right).
\end{equation}
Now, by Corollary \ref{concentration_2} (with $\alpha = 1/2 + \varepsilon/2$), there exists a positive integer $N_1$ such that for all $N\geq N_1$,
\begin{align}\mathbb{P}(E_x \cap E_y) &\geq 1 - \mathbb{P}(E_x^c) - \mathbb{P}(E_y^c) \\
	 &\geq 1 - 2 N^{-c_2 |U_x|} - 2 N^{-c_2 |U_y|} \\
	 &\geq 1 - 4 N^{-c_2 R^d} \label{eq:2nd_error}.
\end{align}
for some constant $c_2 > 0$ independent of $R$. Combining the estimates \eqref{eq:1st_error} and \eqref{eq:2nd_error}, we obtain
\begin{align}
\mathbb{P}(\tau \leq 2m+1) &\geq \mathbb{P}(\tau \leq 2m+1|E_x \cap E_y) \, \mathbb{P}(E_x \cap E_y)  \\
 &\geq 1  - \exp\left(-\frac{\beta}{d^d}\cdot  N^{\varepsilon d/2}\right) - 4 N^{-c_2 R^d}.
\end{align}
By choosing $R$ constantly large, we can guarantee that there exists some positive integer $N_2$ such that for all $N\geq N_2$
\begin{align}\label{stopping_time_estimate}
	 \mathbb{P}(\tau \leq 2m+1) \geq 1 - 4 N^{-2d-1}.
\end{align}
Using estimate \eqref{stopping_time_estimate} with relation \eqref{stopping_time_relation}, union bound over all pairs $x, y$ in $[N]^d$ then gives
\[
\mathbb{P}(D_N \leq 2R+2m+1) \geq 1 - 4 N^{-1},
\]
which readily implies Theorem \ref{main_theorem}.
\end{proof}

\section{Proof of Lemma \ref{prob_iterate}}\label{proof_key_lemma}

We start by taking conditional expectation of both sides of equation \eqref{iterative_construction_2}: 
\[
\mathbb{E} \bigl[ |\partial B_{m+1}| \bigm| \mathbf{B}_m\bigr] = \sum_{y\in B_m^c} \mathbb{P}(y\leftrightarrow \partial B_m \mbox{ in } G_m|\mathbf{B}_m).
\]
To help estimate this sum of probabilities, we introduce the following statistic. 

\begin{definition} [Weight of a site relative to a region] Given a site $y \in [N]^d$ and a subset $S \subset [N]^d$, we define the weight of $y$ relative to $S$ to be
	\[\rho(y, S) := \sum_{x\in S, x\neq y} \frac{\beta}{|x-y|^d}.
	\]
\end{definition} 
It follows from the definition of the random graph $G(N)$ that
\[
\Prob(y\leftrightarrow S \mbox{ in } G(N)) = 1 - e^{-\rho(y, S)}
\]
for $y\notin S$, and thus
\begin{equation}\label{sum_of_prob}
	\mathbb{E} \bigl[ |\partial B_{m+1}| \bigm| \mathbf{B}_m\bigr] = \sum_{y\in B_m^c} 1 - \exp{\left(-\rho(y, \partial B_m)\right)}.
\end{equation}
The terms in this summation will be estimated by the following elementary bounds. 
\begin{lemma}\label{elementary} For any real number $\rho \geq 0$, 
	\[
	\rho (1 - \rho) \leq 1-e^{-\rho} \leq \rho.
	\]
\end{lemma}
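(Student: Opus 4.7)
The inequality is an elementary calculus fact, and I would prove each side separately. The upper bound $1 - e^{-\rho} \leq \rho$ is just the tangent-line inequality $e^{-\rho} \geq 1 - \rho$, which holds for every real $\rho$ by convexity of $x \mapsto e^{-x}$ (the graph lies above its tangent at $0$). For the lower bound $1 - e^{-\rho} \geq \rho - \rho^2$, my plan is to study the difference $g(\rho) := 1 - e^{-\rho} - \rho + \rho^2$ on $[0, \infty)$ by differentiation: one checks $g(0) = 0$, then $g'(\rho) = e^{-\rho} - 1 + 2\rho$ with $g'(0) = 0$, and finally $g''(\rho) = 2 - e^{-\rho} \geq 1$ throughout $[0, \infty)$. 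Since $g'' \geq 0$, $g'$ is nondecreasing and therefore $g'(\rho) \geq g'(0) = 0$; since $g' \geq 0$, $g$ is nondecreasing and therefore $g(\rho) \geq g(0) = 0$, which is exactly the claim. (One could alternatively dispatch the case $\rho \geq 1$ trivially — since then $\rho(1-\rho) \leq 0 \leq 1 - e^{-\rho}$ — and handle $0 \leq \rho \leq 1$ by grouping consecutive pairs of terms in the Taylor series $1 - e^{-\rho} = \sum_{k \geq 1} (-1)^{k+1} \rho^k/k!$, but the calculus argument above is cleaner and avoids case analysis.)

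There is no real obstacle here: the whole proof is a two-line exercise. The reason to isolate it as a lemma is that it packages a convenient two-sided bound on $1 - e^{-\rho}$ in terms of $\rho$, with an explicit quadratic defect on the lower side. In the ensuing proof of Lemma \ref{prob_iterate} it will be applied term-by-term to the sum \eqref{sum_of_prob}: the linear upper bound turns $\mathbb{E}[|\partial B_{m+1}| \mid \mathbf{B}_m]$ into $\sum_{y \in B_m^c} \rho(y, \partial B_m)$, which can be analyzed directly from the definition of the weight, while the lower bound $\rho(1-\rho)$ preserves the leading-order linear behavior in the regime where each individual weight $\rho(y, \partial B_m)$ is small, with a quadratic error that is easy to control under the size constraint \eqref{size_limit}.
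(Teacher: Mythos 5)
Your proof is correct and matches the paper's approach, which simply cites ``a standard Taylor expansion'' without writing out the details; your differentiation argument for $g(\rho)=1-e^{-\rho}-\rho+\rho^2$ and the tangent-line inequality for the upper bound are exactly the standard verification being alluded to.
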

\begin{remark} These bounds follow from a standard Taylor expansion. It is only sharp for small $\rho$. In that regime, $1-e^{-\rho}$ behaves like the linear function $\rho$. 
\end{remark}

\begin{lemma}\label{weight_estimate} There exist finite, positive constants $c = c(d), C = C(d), N_*$ such that for $N\geq N_*$, 
	\[
	c \log \left(\frac{N^d}{|V^c|}\right) \leq \rho(x, V) \leq C \log |V|
	\]
	for any site $x \in [N]^d$ and set $V\subseteq [N]^d$.
\end{lemma}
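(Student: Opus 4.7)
The plan is to prove both bounds by reducing to the extremal configurations of $V$ and then applying elementary counts of $\ell^1$-spheres in $[N]^d$. For the upper bound, the key observation is that $\rho(x, \cdot)$ is a sum whose terms decrease with the distance from $x$, so for fixed $|V|$ the value $\rho(x, V)$ is maximized when $V$ consists of the $|V|$ sites of $[N]^d \setminus \{x\}$ closest to $x$. This extremal $V$ is contained in a ball $B(x, R)$ of radius $R = O(|V|^{1/d})$, since the $\ell^1$-ball of radius $r$ contains $\Theta(r^d)$ lattice points. Using the standard estimate $|\{y \in \mathbb{Z}^d : \|y - x\| = r\}| \leq C_d \, r^{d-1}$, I obtain
\[
\rho(x, V) \leq \beta \sum_{r=1}^R \frac{C_d \, r^{d-1}}{r^d} = O(\log R) = O(\log |V|),
\]
which is the desired upper bound.

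For the lower bound, a mirror exchange argument shows that $\rho(x, V)$ is \emph{minimized}, for fixed $|V^c|$, when $V^c$ is the set of the $|V^c|$ sites closest to $x$. The extremal $V^c$ is then contained in a ball $B(x, R)$ with $R = O(|V^c|^{1/d})$, so $V$ contains every site at $\ell^1$-distance greater than $R$ from $x$. The crucial ingredient is a \emph{uniform} lower bound on sphere counts: for any $x \in [N]^d$ and any $r \leq N/2$, the number of sites of $[N]^d$ at $\ell^1$-distance exactly $r$ from $x$ is at least $c_d \, r^{d-1}$. This follows from counting sites in the orthant from $x$ pointing toward the nearest corner of $[N]^d$, in which the full $\ell^1$-sphere fits inside the box for $r \leq N/2$. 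Summing over $R < r \leq N/2$ gives
\[
\rho(x, V) \geq c_d \, \beta \sum_{r = R+1}^{N/2} \frac{1}{r} \geq c_d \, \beta \bigl( \log N - \log R - O(1) \bigr) \geq \frac{c_d \, \beta}{d} \log\left(\frac{N^d}{|V^c|}\right) - O(1),
\]
which becomes the stated lower bound after absorbing the additive constant into a slightly smaller $c$.

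The main obstacle is keeping the sphere-count estimates uniform in the position of $x$ in the box, since both the upper and lower counts are most delicate when $x$ sits near a corner or a face; the orthant trick above is what keeps the dimensional constant $c_d$ bounded away from zero independently of $x$. A smaller nuisance is the boundary regime $|V^c| \gtrsim N^d$, where $\log(N^d/|V^c|) = O(1)$ or even tends to $0$; there I would either use the crude bound $\rho(x, V) \geq (|V| - 1)\, \beta \, (dN)^{-d}$ against the bound $\log(N^d/|V^c|) \leq |V|/|V^c|$, or simply choose $c$ small enough that the inequality becomes trivial. The threshold $N_*$ enters precisely to guarantee $R = O(|V^c|^{1/d}) \leq N/2$ throughout the nontrivial regime, so that the uniform sphere-count bound applies.
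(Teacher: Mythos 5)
Your argument is correct and follows exactly the strategy the paper itself indicates (the remark after the lemma is the only ``proof'' the paper supplies): compare $\rho(x,V)$ against the extremal configurations in which $V$, respectively $V^c$, is clustered around $x$, and sum $1/\|x-y\|^d$ over $\ell^1$-shells, using a single orthant of each sphere to make the shell counts uniform in the position of $x$; your handling of the boundary regime via $\log(N^d/|V^c|)\leq |V|/|V^c|$ together with the crude bound $\rho(x,V)\gtrsim |V|N^{-d}$ is the right patch. One cosmetic slip: the orthant in which the sphere of radius $r\leq N/2$ stays inside the box is the one pointing toward the \emph{farthest} corner from $x$, not the nearest.
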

\begin{remark} This estimate is very simple-minded: the upper and lower bounds come from considering the extremal cases where $V$ is clustered around $x$ and $V^c$ is clustered around $x$, respectively, and summing $1/|x-y|^d$ over sites $y\in V, y\neq x$. Consequently, the gap between the lower and upper bound becomes quite wide when the size of $V$ gets small. If we have a better understanding of how $V$ is distributed in the box relative to $x$, we would be able to get a tighter control. 
\end{remark}

We are now ready to prove Lemma \ref{prob_iterate}.

\begin{proof}[Proof of Lemma \ref{prob_iterate}]
	The iterative upper bound in \eqref{eq:prob_iterate} is an easy consequence of the identity \eqref{sum_of_prob}. By the upper bound in Lemma \ref{elementary}, 
	\begin{align}
		\sum_{y\in B_m^c} 1 - \exp{\left(-\rho(y, \partial B_m)\right)} &\leq \sum_{y\in B_m^c} \rho(y, \partial B_m) \\
		&= \sum_{y\in B_m^c} \sum_{x\in \partial B_m} \frac{\beta}{|y-x|^d} \\
		&= \sum_{x\in \partial B_m} \rho(x, B_m^c) \\
		&\leq |\partial B_m| \cdot C\log N
	\end{align}
	where we exploited the additive definition of the weight and used the upper bound from Lemma \ref{weight_estimate}. (Notice that the condition \eqref{size_limit} is not required for the upper bound, which is why the constant $C$ appearing in \eqref{eq:prob_iterate} does not depend on $\alpha$.) 
	
	To obtain the iterative lower bound in \eqref{eq:prob_iterate}, we would like to repeat the calculation above but applying the lower bound from Lemma \ref{elementary} instead. However, this strategy is going to match the upper bound only when $\rho = o(1)$. Therefore, we shall restrict the sum \eqref{sum_of_prob} to be over the sites in
	\[
	V_m := \{y\in B_m^c: \rho(y, \partial B_m) < \omega_m\}
	\]
	for some threshold $\omega_m = o(1)$ to be specified later. Then
	\begin{align}
		\sum_{y\in B_m^c} 1 - \exp{\left(-\rho(y, \partial B_m)\right)} &\geq \sum_{y\in V_m} 1 - \exp{\left(-\rho(y, \partial B_m)\right)} \\
		&\geq \sum_{y\in V_m} \rho(y, \partial B_m) (1-\omega_m) \\
		&= \sum_{z\in \partial B_m} \rho(z, V_m) (1-\omega_m) \\
		&\geq |\partial B_m| \cdot c \log \left(\frac{N^d}{|V_m^c|}\right) (1-\omega_m)
	\end{align}
	where we used the lower bound in Lemma \ref{weight_estimate} in the last step. It remains to choose the set $V_m$ by specifying the threshold $\omega_m$. We want to make sure both $\omega_m = o(1)$ and $\log \left(\frac{N^d}{|V_m^c|}\right) = \Theta(\log N)$. To achieve both, we may simply take
	\[
	\omega_m = (\log N)^{-1}.
	\] 
	Then by the pigeonhole principle,
	\begin{align}
		|V_m^c \setminus B_m| &= |\{y\in B_m^c: \rho(y, \partial B_m) \geq \omega_m\}| \\
		&\leq \frac{\displaystyle\sum_{y\in B_m^c} \rho(y, \partial B_m)}{\omega_m} \\
		&\leq C \log N \cdot |\partial B_m| \cdot \log N
	\end{align}
	by the same estimates we used for the upper bound. Thus,
	\begin{align}
		|V_m^c| = |V_m^c \setminus B_m| + |B_m| \leq C (\log N)^2 |\partial B_m| + |B_m| \leq (1 + C(\log N)^2) N^{\alpha d},
	\end{align}
	where we used the condition \eqref{size_limit}, and so
	\begin{align}\label{bad_bound}
		\log \left(\frac{N^d}{|V_m^c|}\right) &\geq \log \left(\frac{N^{(1-\alpha)d} }{ 1 + C(\log N)^2}\right) = \left(d(1-\alpha) - \Theta \left(\frac{\log \log N}{\log N} \right)\right) \log N. 
	\end{align}
\end{proof}

\section{Proof of Corollary \ref{concentration}}\label{proof_corollaries}

Let $E_m$ be the event that 
	\[c_1 \log N \leq  \frac{|\partial B_{m}|}{|\partial B_{m-1}|} \leq C \log N
	\]
	where $c_1, C$ are the constants appearing in Corollary \ref{prob_iterate_2}. Let
	\[
	\mathcal{E}_m = \bigcap_{i=1}^m E_i,
	\]
	then
	\[(c_1 \log N)^m |B_0| \leq	|\partial  B_{m}| \leq (C \log N)^m |B_0|\]
	on the event $\mathcal{E}_m$. In particular,
	\begin{align}
	|B_m| = \sum_{i=0}^m |\partial B_i| &\leq \frac{(C\log N)^{m+1} - 1}{(C\log N) - 1} |B_0| \\
	&\leq (C\log N)^m K \left(1+\Theta(1/\log N)\right) \\
	&\leq N^{\alpha d (1 + \Theta(1/\log \log N))}
	\end{align}
	as long as $m\leq \frac{\alpha d \log N}{\log \log N}$. We will use the following version of the union bound:
	\begin{align}
		\Prob\left(\mathcal{E}_m^c\right) = \sum_{i=1}^m  \Prob\left( E_i^c \cap \mathcal{E}_{i-1}\right) = \sum_{i=1}^m  \Prob\left( E_i^c| \mathcal{E}_{i-1}\right) \Prob (\mathcal{E}_{i-1}) \leq \sum_{i=1}^m \Prob\left( E_i^c | \mathcal{E}_{i-1}\right).
	\end{align}
	By Corollary \ref{prob_iterate_2} applied to $\tilde \alpha = \alpha + \varepsilon$ for some $\varepsilon > 0$, for sufficiently large $N$,
	\begin{align} 
		\Prob\left( E_i^c | \mathcal{E}_{i-1}\right) \leq 2\exp\left(-\tilde c_2 \log N \cdot |\partial B_m|\right) \leq 2\exp\left(-\tilde c_2 \log N \cdot (c_1 \log N)^{i-1} |\partial B_0|\right).
	\end{align}
where $\tilde c_2$ is the constant $c_2$ appearing in Corollary \ref{prob_iterate_2} for $\tilde \alpha = \alpha+\varepsilon$. Therefore,
	\begin{align}
		\Prob\left(\mathcal{E}_m\right)	= 1 - \Prob\left(\mathcal{E}_m^c\right) &\geq 1 - \sum_{i=1}^m 2\exp\left(-\tilde c_2 \log N \cdot (c_1 \log N)^{i-1} |B_0|\right) \\
		& \geq 1 - 2\exp{\left(-\tilde c_2 \log N |B_0| \right)}(1+o(1)).
	\end{align}

\section*{Acknowledgment}

This research has been partially supported by NSF grant DMS-1954343. The author wishes to express gratitude to his postdoctoral mentor Marek Biskup for helpful discussions and feedback, in addition to having introduced the problem to him in the first place and providing financial support. 

\setlength\parskip{0pt}

\bibliographystyle{alpha}
\bibliography{bib}

\end{document}